\numberwithin{equation}{section}
\newtheorem{theorem}{Theorem}[section]
\newtheorem{lemma}[theorem]{Lemma}
\def\beq{\begin{equation}}
\def\eeq{\end{equation}}
\def\be{\begin{equation*}}
\def\ee{\end{equation*}}
\begin{document}
\title{The axiom of choice and model-theoretic structures}

\author{J.K. Truss}
  \address{Department of Pure Mathematics,
          University of Leeds,
          Leeds LS2 9JT, UK}
  \email{pmtjkt@leeds.ac.uk}

\keywords{axiom of choice, model theory, amorphous, strongly minimal sets}

\subjclass[2000]{03E25, 03C55}

\begin{abstract}
We discuss the connections between the failure of the axiom of choice in set theory, and certain model-theoretic structures with enough symmetry.
\end{abstract}

\maketitle


\section{\textbf{Introduction}}\label{intro}

This paper develops themes from \cite{Truss2}, where connections were described between questions about the axiom of choice and certain structures in model theory. Some of these ideas are also presented in 
\cite{Typke2} and \cite{Panasawatwong}. The main definition will be that two sets $X$ and $Y$ are {\em equivalent}, written $X \equiv Y$, if for any first order structure in a finite or countable language, 
which can be placed on the domain $X$, forming a structure $\mathcal X$, there is a first order structure (in the same language) which can be placed on $Y$ to form $\mathcal Y$, in such a way that 
$\mathcal X$ and $\mathcal Y$ are elementarily equivalent. Now it is a triviality that in the presence of the axiom of choice, two sets $X$ and $Y$ are equivalent in this sense if and only if they are 
finite of the same cardinality, or they are both infinite, as follows easily by using the L\"owenheim--Skolem Theorems. The situation however radically changes once one drops the axiom of choice. The theme 
will be that in the absence of the axiom of choice, even a mere {\em set} can hide quite a lot of (latent) structure. So, far from being more restrictive, dropping the axiom of choice can reveal features 
which in the presence of being able to well-order everything in sight become uninteresting.

To give an initial and fairly easy example from \cite{Truss3}, a set is said to be {\em amorphous} if it is infinite, but cannot be written as the disjoint union of two infinite sets. It was pointed out to 
the author by Azriel Levy that it is possible for an amorphous set $A$ to have no partition into infinitely many finite sets unless all but finitely many of them are singletons (this is called being {\em 
strictly amorphous}); it is also possible for an amorphous set $B$ to be the disjoint union of a family of 2-element sets. In the second case one can impose a graph structure on the set in which every 
connected component has size 2, but this is not possible for a strictly amorphous set. Hence these two are not `equivalent' in the sense just introduced. In other words, $B$ can be endowed with a structure
which makes it a graph with all connected components of size 2, but $A$ {\em cannot}, so that $A \not \equiv B$. Even though the graph structure is not `part of' $B$, {\em qua} set, it is somehow still 
potentially present, in a `latent' fashion.

In fact, the notion can be made more general, in the sense that one can consider other types of structure, not necessarily first order, on a set, and then one gets a similar definition, which may turn out 
to give rather different consequences. Whatever language $L$ one wishes to consider however, one would usually insist that the set of formulae or sentences of $L$ used should themselves admit a 
well-ordering, which gives a way of singling out one structure from others, as being the one which satisfies a sentence of $L$ which is least in this well-order that the other structures do not satisfy. We 
have a limited supply of examples of this type, sufficient may be to show that the idea could be fruitful. Specifically, in the case of Dedekind finite sets, being those which have no countably infinite 
subset, we can naturally subdivide into two cases, the first in which there is no {\em partition} of the set into a countably infinite family, and the rest. The former is called `weakly Dedekind-finite' in 
\cite{Typke1}, and the connections with $\aleph_0$-categorical structures are explained in that paper. So there is here a direct connection with (well studied) first order structures. For Dedekind finite 
sets outside this class however, one needs to consider stronger languages, such as first order languages with infinitely many symbols, or infinitary languages such as $L_{\omega_1, \omega}$ to gain any 
similar characterization.

\section{Fraenkel--Mostowski models}

The easiest way to form models of set theory without the axiom of choice is to use so-called {\em Fraenkel--Mostowski} models. These were invented by Fraenkel and Mostowski before Cohen showed how one can 
get models of set theory without the axiom of choice using forcing. The disadvantage is that they work in a weaker set theory, which we call FM, or possibly FMC (Fraenkel--Mostowski with choice) in which the 
axiom of extensionality is relaxed to allow the existence of `atoms', that is, sets which are not equal to the empty set, but all the same, have no members. The great advantage of using this method is that 
the arguments about groups of permutations are more transparent, and not complicated by technicalities involving automorphisms of the notions of forcing, names, and so on. The disadvantage is that one may 
wish for out and out Zermelo--Fraenkel consistencies. In many or most cases however, this is automatically provided by the Jech-Sochor Theorem \cite{Jech} or strengthenings due to Pincus \cite{Pincus1}, and 
this applies in all cases that we study here.  

The general framework for Fraenkel--Mostowski models used here is as follows. We start with a model $\mathfrak M$ for FMC in which there is a set $U$ of atoms (and saying that it satisfies FMC means that it 
fulfils all axioms of ZFC except for extensionality which now becomes the statement that two sets not in $U$ are equal if and only if they have the same members). We assume given a group $G$ of permutations 
of $U$ (lying in $\mathfrak M$), and a normal filter $\mathfrak F$ of subgroups of $G$. This means that $\mathfrak F$ is closed under finite intersections and supersets, and also under conjugacy by members 
of $G$. The group $G$ induces a natural action on the whole of $\mathfrak M$ by letting $g(a) = \{g(b): b \in a\}$, which is a valid definition by transfinite induction on rank, and the setwise and 
pointwise stabilizers of $a \in {\mathfrak M}$ are written as usual and defined by $G_a = \{g \in G: g(a) = a\}$ and $G_{(a)} = \{g \in G: (\forall b \in a)(g(b) = b)\}$ respectively. A set is said to be 
{\em symmetric} if its (setwise) stabilizer lies in $\mathfrak F$, and the resulting FM model is given by ${\mathfrak N} = \{x \in {\mathfrak M}: x \subseteq {\mathfrak N} \wedge G_x \in {\mathfrak F}\}$. 
Note that this is another definition by transfinite induction, which may be alternatively expressed by saying that all members of the transitive closure of $x$ are symmetric. Since we are dealing with FM 
models, in which extensionality is violated for the atoms, we note that for rank 0, $x$ can be either the `true' empty set, in which case $G_x = G$, so this automatically lies in $\mathfrak N$, or an atom 
$a$, which lies in $\mathfrak N$ provided that $G_a \in {\mathfrak F}$. It is usually assumed that the stabilizers of all members of $U$ lie in $\mathfrak F$, so this is also always true, and this ensures 
that $U \in {\mathfrak N}$. 

Note that the closure of $\mathfrak F$ under conjugacy is required to ensure that the axiom of replacement is true in $\mathfrak N$, which relies on the fact that $gG_xg^{-1} = G_{gx}$. The easiest way to 
ensure that this holds is to require that $\mathfrak F$ is `generated by finite supports', meaning that it is the filter generated by $\{G_a: a \in U\}$. Thus $H \in{\mathfrak F}$ if and only if 
$H \ge G_{(A)}$ for some finite $A \subseteq U$. Normality follows from $gG_{(A)}g^{-1} = G_{(gA)}$. More generally, it suffices for normality that $\mathfrak F$ is generated by the pointwise stabilizers of 
some family of sets which is fixed (setwise) by the action of $G$, as follows by the same calculation. (For instance, one may similarly consider the filter generated by countable supports, and so on.) In 
general, any $A \subseteq U$ satisfying $G_x \ge G_{(A)}$ is called a `support' of $x$.  A key point in discussing versions of AC in $\mathfrak N$ is that the pointwise stabilizer $G_{(x)}$ lies in 
$\mathfrak F$ if and only if $x$ can be well-ordered in $\mathfrak N$. In one direction, suppose that $G_{(x)} \in {\mathfrak F}$, and let $\prec$ be any well-ordering of $x$ in $\mathfrak M$. Then we can 
see that $G_{(x, \prec)} \ge G_{(x)}$, from which it follows that $x$ can be well-ordered {\em in $\mathfrak N$}. For if $g \in G_{(x)}$ and $y \prec z$, then $(y, z) \in \prec$, so from $gy = y$ and 
$gz = z$ we deduce that $(gy, gz) \in \prec$. Conversely, if $x$ can be well-ordered by $\prec$ in $\mathfrak N$, then $G_{(x, \prec)} \in {\mathfrak F}$. Since any automorphism of a well-ordered set is 
trivial, $G_{(x, \prec)}$ fixes each member of $x$, and hence $G_{(x, \prec)} \le G_{(x)}$, which implies that $G_{(x)} \in {\mathfrak F}$. To violate AC in $\mathfrak N$, the following property therefore 
suffices: no member of $\mathfrak F$ fixes the whole of $U$.

\section{Homogeneous structures}

Very often, the Fraenkel--Mostowski method is applied to a structure having a lot of symmetry, for instance $\mathcal A$ is countable and `homogeneous', meaning that any isomorphism between finitely 
generated substructures extends to an automorphism. (In the relational case, `finitely generated' is the same as `finite', which makes things easier to verify.) One takes the set $U$ of atoms to be indexed 
by the members of $\mathcal A$, the group to be induced on $U$ by the automorphism group of $\mathcal A$, and the filter generated by finite supports. The two most famous examples of this are Fraenkel's 
first model, in which the structure is an infinite set with just equality, and Mostowski's `ordered' model, in which $\mathcal A$ is the ordered set of rational numbers. In \cite{Truss2} (based on ideas in 
\cite{Pincus2}) this is applied to a series of classical implications/non-implications among weak versions of the axiom of choice, which were listed in Levy's paper \cite{Levy}. 
 
The connection between (countable) homogeneous structures and amalgamation classes was analyzed by Fra{\"\i}ss{\'e}. A family $\mathcal C$ of structures is said to be an {\em amalgamation class} if 
(1) it has only countably many members up to isomorphism, (2) it is closed under forming finitely generated substructures, and under isomorphisms, (3) it has the `joint embedding property' JEP: any two 
members can be embedded in another member, and (4) it has the {\em amalgamation property}, that if $B$, $C$, and $D$ are structures in $\mathcal C$, and embeddings $p_1$, $p_2$ of $B$ into $C$ and $D$ 
respectively are given, then there are $E$ in the class, and embeddings $p_3$, $p_4$ of $C$, $D$ respectively into $E$ such that `the diagram commutes', meaning that $p_3p_1 = p_4p_2$. Fra{\"\i}ss{\'e} 
showed that a class of finitely generated structures is an amalgamation class if and only if it is equal to the `age' of a countable homogeneous structure. Here the {\em age} of $\mathcal A$ is the family 
of structures which are isomorphic to a finitely generated substructure of $\mathcal A$. We state his main theorem as follows.

\begin{theorem} \label{3.1}:  Let $\mathcal C$ a class of finitely generated structures. Then 

(i) there is a countable structure $\mathcal A$ with age equal to $\mathcal C$ if and only if $\mathcal C$ satisfies properties (1), (2), and (3).

(ii) there is a countable homogeneous structure $\mathcal A$ with age equal to $\mathcal C$ if and only if $\mathcal C$ satisfies satisfies properties (1), (2), (3) and (4). Furthermore, this structure is 
unique up to isomorphism.    \end{theorem}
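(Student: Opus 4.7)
My plan is to prove Fra{\"\i}ss{\'e}'s theorem in three stages: verify that the stated closure properties are necessary, construct a countable structure realising them, and establish uniqueness via a back-and-forth argument.

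For part (i), the forward direction is routine: if $\mathcal A$ is countable then its age has countably many isomorphism types (a finitely generated substructure is determined by a finite tuple of generators from $\mathcal A$), is closed under finitely generated substructures (because ``is a finitely generated substructure of'' is transitive), and satisfies JEP (the substructure generated by the union of two finitely generated substructures of $\mathcal A$ is again finitely generated and embeds both). For the converse I would enumerate representatives $\mathcal C_0, \mathcal C_1, \ldots$ of the isomorphism types in $\mathcal C$ and, using JEP at each stage, inductively build a chain $\mathcal A_0 \hookrightarrow \mathcal A_1 \hookrightarrow \cdots$ in $\mathcal C$ such that $\mathcal C_n$ embeds into $\mathcal A_{n+1}$; the union is a countable structure, and property (2) ensures that its age is exactly $\mathcal C$ (every finitely generated substructure of the union already lies inside some $\mathcal A_n \in \mathcal C$).

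For part (ii), necessity of (4) follows from homogeneity: realise $C$ and $D$ as finitely generated substructures of $\mathcal A$; then $p_1(B)$ and $p_2(B)$ are isomorphic finitely generated substructures of $\mathcal A$, so homogeneity provides an automorphism identifying them, and the finitely generated substructure of $\mathcal A$ containing the merged images serves as $E$. Sufficiency is the Fra{\"\i}ss{\'e} limit construction: maintain a chain in $\mathcal C$ and schedule a countable list of ``tasks'' of the form (current stage, a structure $\mathcal D \in \mathcal C$, an embedding of a finitely generated substructure of the current stage into $\mathcal D$); at each step invoke the amalgamation property to extend the current stage so that the scheduled task is realised over the common substructure. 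The union $\mathcal A$ is countable; every member of $\mathcal C$ appears in its age (JEP being a consequence of (4) applied over the trivial common substructure, as in the converse of (i)); and $\mathcal A$ is homogeneous because every isomorphism between finitely generated substructures of $\mathcal A$ is, by the enumeration, extended to an isomorphism between finitely generated substructures of a later stage, and iterating back and forth builds the required automorphism. Uniqueness follows by a standard back-and-forth between two countable homogeneous structures sharing the age $\mathcal C$: maintain a finite partial isomorphism, and at each step use age equality together with homogeneity of the target to extend by a prescribed new element of one side.

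The main obstacle is the bookkeeping in the Fra{\"\i}ss{\'e} limit: one must enumerate the amalgamation tasks so that \emph{every} finitely generated substructure of the eventual limit, paired with every embedding of it into a member of $\mathcal C$ and every partial isomorphism between its finitely generated substructures, is attended to at some finite stage, while keeping the construction countable. This is handled by a standard diagonal enumeration ranging simultaneously over stages, isomorphism types in $\mathcal C$, and finite tuples from the current stage.
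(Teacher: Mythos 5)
The paper itself omits the proof of Theorem \ref{3.1}, deferring to Macpherson's survey, so there is no in-paper argument to compare against; your proposal is the standard Fra{\"\i}ss{\'e} argument (chain construction via JEP for (i), task-scheduled amalgamation for the limit, extension property plus back-and-forth for homogeneity and uniqueness) and it is correct in all essentials. One small inaccuracy worth fixing: your parenthetical claim that JEP follows from (4) ``applied over the trivial common substructure'' fails in general, since for a language without constant symbols the empty structure need not be admissible as a member of $\mathcal C$, which is exactly why (3) is listed as a separate hypothesis; this is harmless here because (3) is assumed anyway, but you should invoke it directly rather than derive it from (4).
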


We omit the proof, but refer to \cite{Macpherson}. Instead, we give several examples, and explain the connection with the corresponding Fraenkel--Mostowski models in some cases. The unique homogeneous 
structure described in part (ii) of the theorem is called the {\em Fra{\"\i}ss{\'e} limit} of the corresponding amalgamation class.

(1) Any countably infinite set is the Fra{\"\i}ss{\'e} limit of the family of finite sets (with no structure apart from equality). This gives us the first Fraenkel model (which contains an `amorphous set'---see the next section).

(2) A countable dense linear order without endpoints is the Fra{\"\i}ss{\'e} limit of the class of finite linear orders. This gives the Mostowski `ordered' model. Mostowski showed that in this model, the axiom of choice is false, but any set can be linearly ordered (the `ordering principle'); later Halpern \cite{Halpern} showed that the boolean prime ideal theorem is also true in this model.

(3) The `random graph' is the Fra{\"\i}ss{\'e} limit of the class of all finite graphs.

(4) The countable atomless boolean algebra is the Fra{\"\i}ss{\'e} limit of the class of finite boolean algebras.

(5) The class of finite partial orders is an amalgamation class, and its Fra{\"\i}ss{\'e} limit is called the {\em generic partial order}. This was used in \cite{Mathias} to prove the independence of the 
order-extension theorem from the ordering principle; this used a Cohen model; to obtain the same result using a Fraenkel--Mostowki model one instead considers the Fra{\"\i}ss{\'e} limit of the class of finite
structures of the form $(X, <, \prec)$ where $<$ is a partial order, and $\prec$ is a linear extension of $<$.

(6) A vector space of countable dimension over a finite field $F$ is the Fra{\"\i}ss{\'e} limit of the class of finite vector spaces over $F$. This gives an amorphous set `of projective type' (see the next 
section).

(7) The Fra{\"\i}ss{\'e} limit of the class of finite bipartite graphs {\em with specified partition} is the generic bipartite graph. It may be best thought of as a partial order with two levels, i.e. in 
which there are no chains of length 3. 

Later we remark that the Fraenkel--Mostowski construction can be performed even for $\aleph_0$-categorical structures, noting that any homogeneous structure over a finite relational language is 
$\aleph_0$-categorical, but not necessarily conversely. Several of the models just mentioned (and others) are used in \cite{Panasawatwong} to analyze a variety of possible definitions of `finiteness'. 
Furthermore, Pincus \cite{Pincus1} showed how to use carefully constructed homogeneous models to give independences between versions of the axiom of choice, which streamlined earlier work solving individual 
cases.

\section{Amorphous sets and a notion of rank}

A set $X$ is said to be {\em amorphous} if it is infinite, but cannot be written as the disjoint union of two infinite sets. Superficially one might think that there isn't much to be said about amorphous 
sets, as the name suggests (`without form'). However, this is far from the case, and a study of the possibilities is carried out in \cite{Truss3}, with at least some success. It was pointed out to the 
author by Wilfrid Hodges that the definition of `amorphous' is very similar to that of `strongly minimal' in model theory, see \cite{Baldwin}. A (definable) set in a first order structure $\mathcal A$ is 
said to be {\em strongly minimal} if it is not the disjoint union of two infinite definable subsets (this is what `minimal' means), even if one is allowed to pass to elementary extensions of the given 
structure (this is what `strongly' means, so strictly speaking, it is a property of the formula defining the set, rather than of the set itself). This is clearly closely related to the definition of 
`amorphous', and since strongly minimal sets have been so widely studied, this observation gives rise to attractive families of amorphous sets. 

The three most famous examples of strongly minimal sets are a pure set, i.e. an infinite set with no structure on it (apart from the equality relation), a vector space over a finite field, and the field of 
complex numbers. Associated with any strongly minimal set is a geometry, and general strongly minimal sets are often described in terms of which kind of geometry they give rise to. The geometry for a pure 
set is uninteresting, and is called `disintegrated'. The one from a vector space is the associated projective geometry, and this is therefore called `of projective type'. The geometry arising from the 
complex numbers is a lot more complicated. 

There are significant differences between the theories of amorphous and strongly minimal sets. The first is that in set theory we are not constrained by the `first order' requirement, and so we should 
instead be thinking in terms of second or higher order definability; or more exactly, the version of definability which looks at properties invariant under the automorphism group. In view of the 
Ryll--Nardzewski Theorem \cite{Hodges}, this amounts to restricting to $\aleph_0$-categorical strongly minimal sets, so for instance, the most popular examples of strongly minimal sets such as the field of 
complex numbers, do not feature. However, examples arising from vector spaces over finite fields, which {\em are} $\aleph_0$-categorical, do arise (and are referred to as having `projective type'). 

The failure of the axiom of choice leads however to other families of examples of amorphous sets, which have no counterpart in the model-theoretic setting, those which are either `unbounded', or 
`unbounded of projective type'. So there is a strong interplay between the situations in set theory without choice and model theory, which does not however give a precise correspondence; strongly minimal 
sets are far richer by virtue of the non-$\aleph_0$-categorical cases, and amorphous sets are far richer by virtue of those of `unbounded' type. If $A$ is amorphous then we may consider what sorts of 
structures may be placed on it. The simplest one is an equivalence relation, which we analyze in terms of the possible partitions $\pi$ of $A$ it defines. If $\pi$ has only finitely many pieces, then it 
follows from $A$ amorphous that exactly one is infinite. Otherwise, if it has infinitely many pieces, then they must all be finite, and furthermore, all but finitely many of them must have the same size, 
which we refer to as the {\em gauge} of $\pi$. If $\pi$ has gauge $n$, then the union $X$ of the pieces of $\pi$ having other sizes is finite, and there is another partition of gauge $n$ such that $X$ has 
size $< n$. We can say that $\pi$ is in {\em standard form}. It is shown in \cite{Truss3} that any two partitions of $A$ of the same gauge have equal sizes of pieces `left over'. So, for instance, it is
not possible for an amorphous set $A$ to be expressible both as a disjoint union of pairs, and as a disjoint union of pairs together with a singleton (it must be either `even' or `odd'). In \cite{Truss3} 
we distinguish various cases. The simplest is that there is a greatest value of the possible gauges of partitions of $A$ into finite sets. This is called being bounded amorphous. 

The `classification' in \cite{Truss3} describes an amorphous set as being bounded or unbounded but not of projective type, or of projective type over (finite) fields of bounded or unbounded size. (Any 
amorphous set of projective type must be unbounded, but it is possible for the cardinalities of the underlying field to be bounded, or not.) This is only a very rough subdivision, rather than a true 
classification, but the aim is clear. If two such sets have the same classifiers, then we hope that in favourable instances the sets will be the `same'. Since one can only show that they are necessarily in 
bijective correspondence by use of the axiom of choice, the best one can hope for is that they are elementarily equivalent, once an appropriate language has been used to describe them. In other words, this 
is equivalence as introduced in the introductory section. 

To create positive examples, we may employ the Fraenkel--Mostowski method, introduced above. In fact we may do this rather generally. Let $\mathcal A$ be any structure, assumed to have a rich enough
automorphism group. In most cases, this will mean that it is `homogeneous', that is, any isomorphism between finite sets must extend to an automorphism (though $\aleph_0$-categoricity is actually 
sufficient). In particular, this will mean that its automorphism group $G$ acts singly transitively. In the Fraenkel-Mostowski construction, we therefore let the set of atoms be indexed by the elements of 
$\mathcal A$, and take as group of permutations the maps induced by automorphisms of $\mathcal A$, also written $G$. Finally, the filter of subgroups is generated by the pointwise stabilizers of finite 
sets.

In the amorphous case, let us consider some examples. In the first case, $\mathcal A$ is a countably infinite set containing an equivalence relation whose classes are all of size 2. The automorphism group 
is then the wreath product of the two-element group (swapping two elements of a pair) with the infinite symmetric group (which permutes the pairs). The structure we started with is strongly minimal, as any 
infinite set definable from finitely many parameters must contain an element such that it, and its paired element are not in the list of parameters, so it follows that the set must contain all elements
for which neither it nor its paired element is in the list of parameters (what is needed to do this formally is to note that the stabilizer of the set of parameters  acts transitively on the set of such 
points). 

In this model we could say that $|U|$ is an `even' number, meaning that $U$ is a disjoint union of pairs. As remarked above, it would then follow that $|U|$ cannot also be odd. However, the same model 
{\em does} contain `odd' amorphous sets, for instance, the one obtained by removing a point from $U$ (or adding one), or one can construct a model purpose built containing an odd amorphous set (though now 
with an $\aleph_0$-categorical but not homogeneous structure, being an equivalence relation in which all classes except one have size 2, the other being a singleton).

For an amorphous set of projective type, we can start with $\mathcal A$ being a vector space $V$ of dimension $\aleph_0$ over a finite field $F$, and $G$ its group of automorphisms (the general linear 
group). The fact that the resulting set is amorphous then follows by an easy group-theoretical argument as before. For if $X$ is a finite subset of $U$, the subspace generated by $X$ is finite, and the 
general linear group acts transitively on its complement. Variants of this example can produce amorphous sets actually corresponding to the geometry, that is the set of 1-dimensional subspaces of $V$; or one
can remove some points and work with what is left (with technicalities needed to sort this out). In other words, there are lots of different options obtained from essentially the same starting point. In all 
cases there will be an amorphous set of projective type, but it may be somewhat `hidden'. Note that in this case, there are partitions of $U$ into finite sets of arbitrarily large gauge, namely the cosets
of subspaces, but there is no way of `combining' these, which would provide a countable subset and violate amorphousness.

In \cite{Truss3} a method is presented of `recovering' the original structure from the structure in the model. A number of assumptions need to be made to make this work, but they are not unreasonable in the 
context. Specifically, one starts with a model $\mathfrak M$ of FMC containing a set $U$ of atoms, a group tailored to the particular case, which will here be the automorphism group of a structure, either an
equivalence relation with classes of size 2, or a vector space over a finite field, and the filter of finite supports, i.e. generated by the pointwise stabilizers of finite sets. One then forms the 
corresponding FM model $\mathfrak N$, which will contain an amorphous set of the appropriate type. Now imagine {\em starting} in $\mathfrak N$, and trying to return to $\mathfrak M$. We assume that $U$ is 
equipped with the appropriate structure, which in the cases we have so far described is either an equivalence relation with classes all of size 2, or a vector space over the given finite field. Once can 
recover the original choice model by adjoining a generic well-ordering of $U$ of type $\omega$. Conditions will be finite 1--1 sequences of members of $U$, with the natural partial order by extension, and 
then if $\mathfrak F$ is an $\mathfrak N$-generic filter for this partial order, by genericity it defines a well-ordering of $U$ of type $\omega$. Then one can see that the generic extension of 
$\mathfrak N$ by $\mathfrak F$ is actually equal to the original model $\mathfrak M$. 

An alternative method, which uses model theory rather than set theory, and in some ways is more flexible, was suggested by Cherlin, and is presented in \cite{Typke1}. Here one considers $U$ together with 
its structure in the two cases, and one looks at the first order theory true in $U$ in this structure. Although $U$ has no well-ordering (in $\mathfrak N$), the {\em language} used is well-orderable 
(actually the language is finite, but the set of formulae is well-orderable in type $\omega$). We need the following lemma. We write $\Delta$ for $\{x: \aleph_0 \not \le x\}$, the class of Dedekind finite 
cardinals, and $\Delta_4$ for $\{|X|: \mbox{ there is no map from $X$ onto } \omega\}$. The latter are called in \cite{Typke1} {\em weakly} Dedekind finite cardinals (even though the condition they have to 
fulfil is actually stronger than being Dedekind-finite).

\begin{lemma}\label{4.1} (i) For any cardinal number $x$, $x \in \Delta_4 \Leftrightarrow 2^x \in \Delta$.  

(ii) If $x, y \in \Delta$, then $x + y, xy \in \Delta$, and similarly for $x, y \in \Delta_4$.
\end{lemma}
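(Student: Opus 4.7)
Both parts go by contraposition or elementary pigeonhole. The one subtle construction is the ``hard'' direction of (i).

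\emph{(i), direction $2^x \in \Delta \Rightarrow x \in \Delta_4$.} Contrapositively, a surjection $f \colon X \twoheadrightarrow \omega$ has pairwise disjoint nonempty fibres $X_n = f^{-1}(n)$, and $S \mapsto \bigcup_{n \in S} X_n$ is an injection $\mathcal{P}(\omega) \hookrightarrow \mathcal{P}(X)$; composed with $n \mapsto \{n\}$ this exhibits $\aleph_0 \leq 2^x$.

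\emph{(i), direction $x \in \Delta_4 \Rightarrow 2^x \in \Delta$.} Contrapositively, assume pairwise distinct subsets $A_0, A_1, \ldots$ of $X$, define $\Phi \colon X \to 2^\omega$ by $\Phi(x)(n) = \chi_{A_n}(x)$, and write $E = \Phi(X) \subseteq 2^\omega$. If $E$ were finite of size $k$, each $A_n$ would be a union of $\Phi$-fibres, giving at most $2^k$ possibilities---contradiction. So $E$ is infinite, and it suffices to construct a surjection $E \twoheadrightarrow \omega$. Define $g^* \in 2^\omega$ recursively by
\[
g^*(n) = \begin{cases} 0 & \text{if } \{f \in E : f \upharpoonright n = g^* \upharpoonright n,\ f(n) = 0\} \text{ is infinite},\\ 1 & \text{otherwise}. \end{cases}
\]
A straightforward induction shows the cylinder $E_n = \{f \in E : f \upharpoonright n = g^* \upharpoonright n\}$ is infinite for every $n$. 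Now set $\sigma(f)$ equal to the least $n$ at which $f(n) \neq g^*(n)$ (and $\sigma(g^*) = 0$ if $g^* \in E$). If $\sigma(E)$ were bounded by some $N$, every $f \in E_{N+1}$ would have to equal $g^*$, forcing $|E_{N+1}| \leq 1$ and contradicting the infinitude just established. Hence $\sigma(E)$ is an infinite subset of $\omega$, order-isomorphic to $\omega$, and composing yields the required surjection $X \twoheadrightarrow \omega$.

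\emph{(ii).} All four closure properties are pigeonhole: for $\Delta$ under sums, an injection $\omega \hookrightarrow X \sqcup Y$ partitions $\omega$ into two sets, at least one infinite and hence isomorphic to $\omega$, giving the forbidden injection into $X$ or $Y$; for $\Delta$ under products, the first-coordinate map either has infinite image (then a canonical recursion over $\omega$ picking the least index realising a fresh value produces $\omega \hookrightarrow X$) or has an infinite fibre (on which the second coordinate injects $\omega$ into $Y$); for $\Delta_4$ under sums, the $X$- and $Y$-images of a surjection $X \sqcup Y \twoheadrightarrow \omega$ cover $\omega$, so one is an infinite subset of $\omega$ and transfers (via its order-type) to a surjection onto $\omega$; for $\Delta_4$ under products, each slice $h(\cdot,y) \colon X \to \omega$ of a surjection $h \colon X \times Y \twoheadrightarrow \omega$ has finite image by $x \in \Delta_4$, and then $y \mapsto \max h(X \times \{y\})$ has finite image by $y \in \Delta_4$, which bounds $h(X \times Y)$ within a finite initial segment of $\omega$---contradicting surjectivity.

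The main obstacle is the recursive definition of $g^*$ in the hard direction of (i): one must verify that the predicate ``this cylinder is infinite'', used to decide each $g^*(n)$, is a legitimate ZF-definable condition on a set already in hand, so the recursion defines $g^*$ with no implicit appeal to choice. Once this is established, the remainder of the argument is routine.
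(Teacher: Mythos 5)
Your proof is correct and follows essentially the same route as the paper: the easy direction of (i) via unions of fibres, the hard direction via a canonical, choice-free binary-splitting recursion on an infinite subset of $2^\omega$ (your generic branch $g^*$ and first-disagreement map $\sigma$ are just a repackaging of the paper's nested infinite families ${\mathcal Y}_n$ and indices $m_n$), and (ii) by the same pigeonhole arguments, with only the cosmetic change of bounding the product case by $y \mapsto \max h(X \times \{y\})$ rather than by the finitely many finite image sets. No gaps.
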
 

\begin{proof} (i) If $X$ has a partition into countably many non-empty sets, then certainly $\aleph_0 \le |{\mathcal P}(X)|$ so $2^x \not \in \Delta_4$. Conversely, suppose that $A_n$ are distinct subsets of 
$X$. For $Y \subseteq \omega$ let $A_Y = \bigcap_{n \in Y} A_n \cap \bigcap_{n \not \in Y}(X \setminus A_n)$. Then all the $A_Y$ are pairwise disjoint, and since $A_n = \bigcup\{A_Y: n \in Y\}$, and all 
$A_n$ are distinct, infinitely many $A_Y$ must be non-empty. Let ${\mathcal Y} = \{Y \in {\mathcal P}(\omega): A_Y \neq \emptyset\}$. Then $\mathcal Y$ is an infinite subset of ${\mathcal P}(\omega)$. Note 
that for any infinite subset $\mathcal Z$ of ${\mathcal P}(\omega)$, there is some $m$ such that $\{Z \in {\mathcal Z}: m \in Z\}$ and $\{Z \in {\mathcal Z}: m \not \in Z\}$ are both non-empty, and one of 
which must therefore be infinite. Using this, we may choose infinite subsets ${\mathcal Y}_n$ and $m_n \in \omega$ by induction. Let $m_0$ be the least such that $\{Y \in {\mathcal Y}: m_0 \in Y\}$ is an 
infinite proper subset of $\mathcal Y$ if any, or if not such that $\{Y \in {\mathcal Y}: m_0 \not \in Y\}$ is an infinite proper subset of $\mathcal Y$, and let ${\mathcal Y}_0$ be whichever of these two 
sets has been chosen (note that AC is {\em not} required here, since we made a choice which was definable in terms of the data). If $m_n$ and ${\mathcal Y}_n$ have been chosen, let $m_{n+1}$ be the least 
$m$ such that $\{Y \in {\mathcal Y}_n: m \in Y\}$ is an infinite proper subset of ${\mathcal Y}_n$ if any, or if not such that $\{Y \in {\mathcal Y}_n: m \not \in Y\}$ is an infinite proper subset of 
${\mathcal Y}_n$. Note that as all members of ${\mathcal Y}_n$ agree about membership or not of $m_0, \ldots, m_n$, we must have $m_n < m_{n+1}$. This defines an infinite strictly decreasing sequence of 
subsets of $X$, and hence there is a map from $X$ onto $\omega$.

(ii) First assume that $x, y \in \Delta$, and let $X$ and $Y$ be disjoint sets of cardinalities $x$ and $y$ respectively. If $f$ maps $\omega$ 1--1 into $X \cup Y$, then either $\{n: f(n) \in X\}$ or 
$\{n: f(n) \in Y\}$ is infinite, contrary to $x, y \in \Delta$. For products, suppose that $f$ maps $\omega$ 1--1 into $X \times Y$. Let $f(n) = (\xi_n, \eta_n)$. If $\xi_n$ takes infinitely many distinct 
values, this gives $\aleph_0 \le x$, and similarly, if $\eta_n$ takes infinitely many distinct values. Since $f$ takes infinitely many values, one of these must apply.  

Next suppose that $x, y \in \Delta_4$. If $g$ maps $X \cup Y$ onto $\omega$, then one of $g^{-1}(\omega) \cap X$ and $g^{-1}(\omega) \cap Y$ is infinite, contrary to $x, y \in \Delta_4$. For products, 
suppose that $f$ maps $X \times Y$ onto $\omega$. If for some $\xi \in X$, the image of $f$ on $\{\xi\} \times Y$ is infinite, we would have $y \not \in \Delta_4$. Hence each such image is a finite subset of
$\omega$. Since the finite subsets of $\omega$ can be well-ordered in type $\omega$, and $x \in \Delta_4$, only finitely many such sets can arise, which means that $f$ cannot be surjective after all.
\end{proof}
 
\begin{theorem}\label{4.2} \cite{Typke1} Let $U$ be a set with no countable partition, which admits a structure axiomatizable in a finite language $L$. Let $T$ be the (complete) set of sentences of $L$ which 
are true in $U$. Then for any $n$, the set of formulae with free variables among $\{v_0, v_1, \ldots, v_{n-1}\}$ is, up to $T$-equivalence, finite. Hence $T$ is $\aleph_0$-categorical.
\end{theorem}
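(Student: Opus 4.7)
The plan is to prove the finiteness of formulae in each arity (up to $T$-equivalence) by a counting argument built on Lemma~\ref{4.1}, and then invoke the Ryll--Nardzewski theorem to derive $\aleph_0$-categoricity. Since $L$ is finite, the set of all $L$-formulae is countably infinite and admits a canonical well-ordering in type $\omega$ (for instance, by G\"odel number); crucially, no form of choice is needed to produce this well-ordering. Because $T$ is the \emph{complete} theory of $U$, two formulae $\varphi(v_0,\ldots,v_{n-1})$ and $\psi(v_0,\ldots,v_{n-1})$ are $T$-equivalent if and only if they define the same subset of $U^n$. So the task reduces to showing that the family $\mathcal{D}_n$ of $\emptyset$-definable subsets of $U^n$ is finite for every $n$.

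Next I would use Lemma~\ref{4.1} to convert the hypothesis on $U$ into a constraint on $\mathcal{P}(U^n)$. The assumption that $U$ has no countable partition says exactly that $|U| \in \Delta_4$; part~(ii) applied inductively gives $|U^n| \in \Delta_4$, and part~(i) then yields $2^{|U^n|} \in \Delta$, i.e.\ $\mathcal{P}(U^n)$ admits no injection from $\omega$. Assume for contradiction that $\mathcal{D}_n$ is infinite. Using the fixed well-ordering of formulae, I would define a sequence $\psi_0,\psi_1,\ldots$ by letting $\psi_m$ be the first formula in the well-ordering, with free variables among $v_0,\ldots,v_{n-1}$, whose interpretation in $U$ differs from each of $\psi_0^U,\ldots,\psi_{m-1}^U$; infinitude of $\mathcal{D}_n$ ensures such a $\psi_m$ exists at every stage, and no choice enters because ``first'' is canonical. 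The map $m \mapsto \psi_m^U$ is then an injection $\omega \hookrightarrow \mathcal{P}(U^n)$, contradicting the previous sentence.

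With the finiteness of formulae per arity (up to $T$-equivalence) established, I would conclude via Ryll--Nardzewski: $T$ is a countable complete theory (as $L$ is finite), and downward L\"owenheim--Skolem, which is valid in ZF whenever the language is well-orderable, furnishes a countable model; the theorem then yields that $T$ is $\aleph_0$-categorical. The only delicate point I anticipate is keeping the extraction of the injection $\omega \to \mathcal{P}(U^n)$ genuinely choice-free, which is precisely why the canonical well-ordering of the formula set is doing the real work in the argument; the rest is routine manipulation of Lemma~\ref{4.1} and a standard application of Ryll--Nardzewski.
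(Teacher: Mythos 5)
Your proposal is correct and follows essentially the same route as the paper: reduce to showing the family of $\emptyset$-definable subsets of $U^n$ is finite, use Lemma~\ref{4.1}(ii) then (i) to get that $\mathcal{P}(U^n)$ is Dedekind finite, and derive a contradiction from an injection $\omega\to\mathcal{P}(U^n)$ built via the canonical well-ordering of formulae. The only difference is that you spell out the choice-free extraction of that injection and the Ryll--Nardzewski/L\"owenheim--Skolem step, which the paper leaves implicit.
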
 

\begin{proof} For each formula $\varphi$ of $L$ in $\{v_0, v_1, \ldots, v_{n-1}\}$ look at all the $n$-tuples of members of $U$ satisfying $\varphi$. As $\varphi$ varies, this gives a countable family of 
subsets $U_\varphi$ of $U^n$. Now from the fact that there is no map from $U$ onto $\omega$, it follows that the same is true for $U^n$, which implies that its power set is Dedekind finite. Hence this 
family of sets is actually finite. If we say that $\varphi \sim \psi$ if $U_\varphi = U_\psi$, then 
$\varphi \sim \psi \Leftrightarrow U \models \forall v_0 \ldots \forall v_{n-1}(\varphi \leftrightarrow \psi) \leftrightarrow T \vdash \forall v_0 \ldots \forall v_{n-1}(\varphi \leftrightarrow \psi)$, and 
it follows that up to $T$-equivalence, there are only finitely many formulae in $\{v_0, v_1, \ldots, v_{n-1}\}$.

The statement we have established is well known to be an equivalent of $\aleph_0$-categoricity, and the equivalence does not require the axiom of choice in its proof.
\end{proof}

Applying this theorem, one can obtain results about the structures which certain Dedekind-finite sets can carry, appealing to known results with AC. An example given in \cite{Typke1} is that of what groups 
can exist on a set admitting a notion of `rank', as defined in \cite{Mendick}. We note that this notion has nothing to do with the usual set-theoretic notion of rank in the cumulative hierarchy, but is 
rather meant to be an analogue of Morley rank in a non-AC context. The definition is as follows. The empty set has {\em rank} $-1$. If $X \neq \emptyset$, then $X$ has {\em rank} $\alpha$, where $\alpha$ is 
an ordinal, if it does not have rank $< \alpha$, and there is some positive integer $k$ such that whenever $X$ is expressed as the disjoint union of $k+1$ sets, at least one of them has rank $< \alpha$. The
least such $k$ which will serve is called the {\em degree} of $X$. Note that this is precisely analogous to the usual definition of `Morley rank', where the definability requirements are omitted. One easily 
verifies that a set has rank 0 if it is non-empty and finite (and its degree is then equal to its cardinality). It is amorphous if and only if it has rank and degree 1. Thus `having rank' is a 
generalization of amorphous. One can indeed verify that any set having a rank in this sense is weakly Dedekind-finite, as is easily proved by transfinite induction. For let $\alpha$ be the least ordinal such
that some set $X$ of rank $\alpha$ (and degree $k$) is {\em not} weakly Dedekind-finite, and let $X = \bigcup_{n \in \omega}X_n$ be an expression for $X$ as a disjoint union of countably many non-empty sets.
Then we can write $X$ as the disjoint union of $k+1$ sets $Y_i = \bigcup_{n \in \omega}X_{n(k+1) + i}$, where $i \le k$, and at least one of these must have rank $< \alpha$, giving a set of smaller rank which
is not weakly Dedekind-finite, which is a contradiction. Now we can build FM models containing sets having arbitrarily large rank, and we can be curious as to what the structures are which are given to us 
by appealing to Theorem \ref{4.2}. We outline one way to do this, which seems to work rather more simply than the method given in \cite{Mendick}.

A boolean algebra is said to be {\em superatomic} if all its homomorphic images are atomic. An equivalent, and for us more useful, way of thinking of superatomic boolean algebras is given for example in 
\cite{Day}. Let $\mathbb B$ be any boolean algebra, and define an increasing sequence of ideals $I_\beta$ inductively thus. Let $I_0$ be the ideal generated by all atoms (where an {\em atom} is a minimal 
nonzero element), and assuming that $I_\beta$ has been defined, and is a proper ideal, let $I_{\beta + 1} \supseteq I_\beta$ be such that $I_{\beta + 1}/I_\beta$ is the ideal of $B/I_\beta$ generated by its 
atoms. At limits we take unions. Then $I_0 \subseteq I_1 \subseteq I_2 \subseteq \ldots$ is an increasing sequence of subsets of $\mathbb B$, so there is a least $\gamma$ such that 
$I_\gamma = I_{\gamma + 1}$. One can verify that $\gamma$ must be a successor, $= \beta + 1$ say, and $\beta$ is called the {\em rank} of $\mathbb B$. Then as is shown in \cite{Day}, $\mathbb B$ is 
superatomic if and only if ${\mathbb B}/I_\beta$ is finite. If we write its order as $2^k$, then $k$ is called the `degree' of $\mathbb B$. 

It is remarked in \cite{Hilton} that an efficient way to view superatomic boolean algebras, at any rate in the countable case, is via topologies on ordinals. If we consider ordinals of the form
$\omega^\alpha + 1$, where this is the ordinal power, and we consider successor ordinals for convenience, since then the usual ordinal topology is compact, we consider the boolean algebra $\mathbb B$ of 
clopen (closed and open) subsets. (To get a superatomic boolean algebra of rank $\alpha$ and degree $k$ one instead considers $\omega^\alpha \cdot k + 1$.) The operations are the usual ones of union, 
intersection, and complementation in the whole set. Then we can verify that $\mathbb B$ is a superatomic boolean algebra of rank $\alpha$ as follows. Notice that the atoms are precisely the singletons of 
the form $\{\beta\}$ where $\beta < \omega^\alpha$ is not a limit ordinal. In other words, $I_0$ consists of the finite sets of ordinals which are not divisible by $\omega$. In a similar way, $I_1$ comprises 
the finite sets of ordinals which are not divisible by $\omega^2$ and so on. In general, $I_\beta$ comprises all finite sets of ordinals which are not divisible by $\omega^{\beta + 1}$. Hence the least 
ordinal $\beta$ such that ${\mathbb B}/I_\beta$ is finite is $\alpha$. Another way of viewing the same thing is to note that the quotient algebra ${\mathbb B}/I_0$ corresponds to the set of limit ordinals, 
and the process of forming successive quotients precisely corresponds to taking the usual Cantor-Bendixson derivative. 
 
We now build an FM model based on such a superatomic boolean algebra $\mathbb B$. We let $U = \{u_\beta: \beta \le \omega^\alpha\}$, and we let $G$ be the group induced on $U$ by homeomorphisms of 
$X = \omega^\alpha + 1$ to itself. This time the filter used is the one generated by singleton supports as well as the stabilizers of clopen sets. Notice that we have to distinguish between atoms of the 
boolean algebra, and the members of $U$, which are `atoms' as in the FM construction. The latter include also $u_\lambda$ for limits $\lambda$, even though $\{\lambda\}$ is {\em not} clopen. We now need to 
demonstrate the connection with MT-rank. Here some care is needed, and this is because, for finite $\alpha$, $U$ {\em does} have rank $\alpha$, but not for infinite $\alpha$, in which case we need to pass 
to a suitable subset. In fact, for each $\lambda$ which is 0 or a limit ordinal, $Y_\lambda = \{u_\lambda, u_{\lambda + 1}, u_{\lambda + 2}, \ldots \}$ is (strictly) amorphous in $\mathfrak N$. First note 
that $Y_\lambda$ lies in $\mathfrak N$, since it is supported by the clopen set $\{u_{\lambda + 1}, u_{\lambda + 2}, \ldots, u_{\lambda + \omega}\}$ and the two singletons $\{u_\lambda\}$ and 
$\{u_{\lambda + \omega}\}$. For if $g \in G$ fixes each of these 3 sets, then it also fixes the boolean combination $Y_\lambda$. Now let $Y$ be a subset of $Y_\lambda$ in $\mathfrak N$, supported by 
$H \in {\mathfrak F}$, which contains the pointwise stabilizer of a finite subset of $U$ and finitely many clopen sets. Now any clopen set either intersects $Y_\lambda$ in a finite set or contains all but 
finitely many of its members, and so $H$ acts transitively on a cofinite subset of $Y_\lambda$. Hence $Y$ is finite or cofinite. Similarly we can see that for each $\lambda$ which is divisible by 
$\omega^2$, $Z_\lambda = \{u_{\lambda + \gamma}: \gamma < \omega^2\}$ lies in $\mathfrak N$ and has MT-rank 2, and so on. This establishes the statement about MT-rank in the case that $\alpha$ is finite. 

If however $\alpha \ge \omega$, then $U$ is not weakly Dedekind-finite, because for each $n \in \omega$, $\{u_\gamma: \gamma$ divisible by $\omega^n$ but not $\omega^{n+1}\}$ lies in $\mathfrak N$ with empty 
support, since $G$ preserves the topology, and these sets are definable from the topology via the Cantor--Bendixson derivative. So in this case, $U$ certainly does not have MT-rank. However, if we instead 
consider $X = \{u_\gamma: \gamma$ is not a limit ordinal$\}$, then we {\em do} obtain a set of MT-rank $\alpha$ (and degree 1) in all cases (even uncountable $\alpha$). To show this, we use transfinite 
induction. In fact we can show that for $0 < \beta < \alpha$ and $\gamma$ such that $\omega^\beta \cdot \gamma < \omega^\alpha$, $X \cap \{u_\delta: \delta < \omega^\beta\}$ and 
$X \cap \{u_\delta: \omega^\beta \cdot \gamma < \delta < \omega^\beta (\gamma + 1)\}$ are of MT-rank $\beta$ (and degree 1). First, they lie in $\mathfrak N$ since $\{u_\delta: \delta \le \omega^\beta\}$ and 
$\{u_\delta: \omega^\beta \cdot \gamma < \delta \le \omega^\beta(\gamma + 1)$ are clopen and the given subsets of $X$ differ from these by finite sets and by the set $\{u_\lambda: \lambda$ a limit$\}$, 
which is fixed by $G$. If $Y \subseteq X \cap \{u_\delta: \delta \le \omega^\beta\}$ lies in $\mathfrak N$, supported by a finite subset of $U$ and finitely many clopen sets, $Y$ contains or is disjoint 
from a final segment of $X \cap \{u_\delta: \delta \le \omega^\beta\}$, so by passing to its complement if necessary, suppose it is disjoint. Dividing into the cases $\beta$ a successor or limit, we see 
that $Y$ is contained in a finite union of sets corresponding to a smaller value of $\beta$, so has rank $< \beta$ by induction hypothesis, as required.

We conclude this section by giving some further examples of what structures can exists on amorphous sets, or indeed on ones having MT-rank. Even in \cite{Truss1} it was shown that no amorphous set can have 
an infinite subset which can be linearly ordered. Generalizing from this case, one can show that the same is true for any set having MT-rank. In fact, in \cite{Mendick} a precise characterization is given 
of which partial orders can exist on a set having MT-rank. With regard to other kinds of structure, such as groups, things are however rather different. We have seen that it is possible for an amorphous set 
of projective type to carry a non-trivial group structure, in fact even a vector space structure over a finite field. Groups arising in this way are however all abelian. Any set of size at least 6 can 
trivially carry a non-abelian group structure on a subset. One wonders whether this is all that can happen. The following theorem from \cite{Typke1} provides an elegant answer. A group is said to be 
{\em abelian-by-finite} if it has an abelian normal subgroup of finite index.

\begin{theorem}\label{4.3} A group $G$ whose underlying set has MT-rank is abelian-by-finite. 
\end{theorem}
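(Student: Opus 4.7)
The plan is to reduce $G$ to a classical $\aleph_0$-categorical group living in the ambient ZFC universe, apply the Baur--Cherlin--Macintyre theorem on $\aleph_0$-categorical $\omega$-stable groups, and then transfer the resulting first-order conclusion back to $G$.

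Since $G$ has MT-rank, the inductive argument given immediately before the theorem statement shows $G$ is weakly Dedekind-finite. Viewing $(G,\cdot,{}^{-1},e)$ as a structure in the finite language of group theory, Theorem \ref{4.2} therefore applies, and $T:=\mathrm{Th}(G)$ is $\aleph_0$-categorical. I let $G^*$ denote the (unique up to isomorphism) countable model of $T$ in the ambient ZFC universe.

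Next I would transfer the rank bound. Morley rank in $G^*$ is computed using only \emph{definable} partitions, whereas MT-rank of $G$ quantifies over \emph{all} set-theoretic partitions available in $\mathfrak N$; so an MT-rank bound on $G$ is a priori stronger than the corresponding Morley rank bound. By $\aleph_0$-categoricity, every parameter-definable subset of $(G^*)^n$ is cut out by a formula $\phi$ together with a parameter tuple whose type is realized in $G^n$, giving a corresponding subset of $G^n$; a transfinite induction then shows that the Morley rank in $G^*$ of any parameter-definable set is bounded above by the MT-rank of its counterpart in $G$. In particular $G^*$ has ordinal Morley rank, so $T$ is totally transcendental, hence $\omega$-stable.

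Finally I would invoke the theorem of Baur, Cherlin and Macintyre that every $\aleph_0$-categorical $\omega$-stable group admits a $0$-definable abelian subgroup of some finite index $n$, say cut out by a formula $\phi(x)$. The statement ``$\phi$ defines an abelian subgroup of index at most $n$'' is expressible as a single $L$-sentence, conjoining closure under the group operations, commutativity $\forall x,y(\phi(x)\wedge\phi(y)\to xy=yx)$, and the coset-cover condition $\exists a_1,\dots,a_n\,\forall y\,\bigvee_i\phi(a_i^{-1}y)$. This sentence lies in $T$ because $G^*\models T$, so $G$ satisfies it too, and $\phi(G)$ is therefore an abelian subgroup of $G$ of finite index, establishing that $G$ is abelian-by-finite. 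The main obstacle is the rank-transfer step: one has to verify carefully that MT-rank computed in the symmetric model $\mathfrak N$ really does bound Morley rank computed in a ZFC model of $T$, the essential point being that $\aleph_0$-categoricity identifies the lattices of parameter-definable subsets across the two settings while MT-rank ranges over a superset of the partitions that Morley rank considers.
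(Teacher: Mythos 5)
Your proof is correct and follows essentially the same route as the paper's: apply Theorem \ref{4.2} to get $\aleph_0$-categoricity of $\mathrm{Th}(G)$, transfer MT-rank to Morley rank of the countable model $G^*$ to get $\omega$-stability, invoke Baur--Cherlin--Macintyre, and pull the conclusion back to $G$ via the $0$-definable abelian subgroup of finite index. The only difference is that you sketch the rank-transfer step explicitly (correctly, modulo the observation that the countable model of an $\aleph_0$-categorical theory is $\omega$-saturated, so Morley rank may be computed there), whereas the paper defers that step to Walczak--Typke's paper.
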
 

\begin{proof} The trick here is to deduce the result from \cite{Baur} where it is shown that an $\aleph_0$-categorical $\omega$-stable group is abelian-by-finite. By Theorem \ref{4.2}, the theory of
$G$ is $\aleph_0$-categorical, so has a unique countable model, written as $G^*$. So what remains is to show that $G^*$ is $\omega$-stable, and that `abelian-by-finite' can be expressed in a first order way,
so that it will be true also for $G$ (since $G$ and $G^*$ satisfy the same first order sentences). Now from the existence of MT-rank on $G$, it follows straightforwardly that $G^*$ has Morley rank (the 
precise argument is given in \cite{Typke1}), and hence that it is $\omega$-stable (see for instance \cite{Pillay} Proposition 6.46). It is also shown in \cite{Baur} that any $\aleph_0$-categorical 
abelian-by-finite group has an abelian subgroup of finite index which is definable without parameters, and hence the desired property is first order expressible.         \end{proof}

\section{Richer languages}

Up till now we have considered first order languages with finitely many symbols, and as we have seen this has restricted us very much to the weakly Dedekind finite case. We may relax this in two different 
ways. The first and less radical, is to allow the language to be infinite, that is, staying with a first order language, but allowing infinitely many symbols. If we are seeking to describe all members of 
$\Delta$, then we may also wish to consider second order, or infinitary logic, for instance $L_{\omega_1 \omega}$.

Let $L$ be a first order language with infinitely many symbols, but which can still be well-ordered. If there are infinitely many constant symbols, then these must be interpreted in any model, so its domain 
$A$ cannot possibly be Dedekind finite. Thus the first realistically different case for us is where there are infinitely many unary predicates ${\underline P}_n$. By taking boolean combinations, we may 
suppose that these are pairwise disjoint, and we therefore at once have infinitely many 1-types, so as expected, the theory will not be $\aleph_0$-categorical. For Russell's `$\omega$ pairs of socks', the 
set $P_n$ determined by each ${\underline P}_n$ has size 2, and these are required to be pairwise disjoint. If we use this to build a Fraenkel--Mostowski model using finite supports, then the automorphism 
group must fix each $P_n$, but it can arbitrarily interchange its two members. Using this, we can see that the set $U$ of atoms is Dedekind-finite in the resulting model, but $|U| \not \in \Delta_4$ since 
it keeps the countable partition. In fact, if $(X, <)$ is an infinite linearly ordered subset of $U$ in $\mathfrak N$, supported by $\bigcup_{n \le N}P_n$ for some $N$, then some member $x$ of $X$ lies 
outside this set, and the permutation $g$ which interchanges $x$ and the other member $y$ of the $P_m$ in which $x$ lies, and fixes all other points, lies in the stabilizer of $\bigcup_{n \le N}P_n$ but 
does not preserve the order. Thus $U$ has no infinite ordered subset, and hence no infinite well-ordered subset, so lies in $\Delta$.

Let us contrast this example with one described in section 4, which provided an amorphous set of gauge 2. The difference is that there the equivalence classes were not named. So if we instead say that 
$x \sim y$ if for some $m$, $x, y \in P_m$, then the automorphism group now just has to preserve the equivalence relation, so we can permute the sets $P_m$ for different $m$, and this leads to the wreath 
product as stated. Model-theoretically, the difference between the two structures is that for $(A, \{P_m: m \in \omega\})$, definable closure is locally finite, but algebraic closure is not. Here we 
recall that the {\em algebraic closure} $acl(X)$ of a set $X$ is the union of all the finite sets which are definable over $X$ (i. e. using parameters from $X$), whereas its {\em definable closure} $dcl(X)$ 
is the union of all the singleton sets which are definable over $X$. It is clear that $X \subseteq dcl(X) \subseteq acl(X)$. In $(A, \{{P_m}: {m \in \omega}\})$, the definable closure of any $X \subseteq A$ 
is the union of all the sets $P_m$ that $X$ intersects, whereas its algebraic closure is the whole of $A$, since each $P_m$ is a finite definable set. However, in $(A, \sim)$, the definable closure of $X$ is 
equal to its algebraic closure, and this is the union of all $P_m$ that $X$ intersects.

It is well-known, and easy to check, that in an $\aleph_0$-categorical structure, algebraic closure is necessarily locally finite, meaning that the algebraic closure of a finite set is finite. This means 
that we can use any $\aleph_0$-categorical structure $\mathcal A$ to produce a Fraenkel--Mostowski model in which the set of atoms is weakly amorphous, thereby giving the reverse construction as in Theorem 
\ref{4.2}, and generalizing the examples given at the end of section 3. The key point is that for any finite subset $X$ of $A$ (a support), its algebraic closure is finite, and by the Ryll--Nardzewski 
Theorem, the pointwise stabilizer of $X$ has only finitely many orbits on the complement of its algebraic closure. The examples previously given were homogeneous over a finite relational language, and these 
are necessarily $\aleph_0$-categorical, as one sees by counting the number of $n$-types for each $n$. 

To give an example of a non-homogeneous structure which is $\aleph_0$-categorical, consider a partially ordered set $(T, <)$ which is a {\em tree} (or semilinear order), meaning that it is not a linear 
order, any two elements have a common lower bound and for any $x \in T$, $\{y \in T: y \le x\}$ is linearly ordered. In \cite{Droste1}, a list of `sufficiently transitive' countable trees is given (where
this means that the automorphism group acts transitively on the set of 2-element chains, and also on the set of 2-element antichains). The simplest in Droste's list has maximal chains isomorphic to 
$\mathbb Q$, and every vertex splits $x$ into two `cones', meaning that $x$ is the greatest lower bound of two incomparable elements, but that for any three pairwise incomparable elements greater than $x$, 
there is a point above $x$ which lies below at least two of them. This characterization (together with countability) is sufficient to determine $T$ uniquely up to isomorphism, and its automorphism group is 
very rich. It is however not homogeneous, as it is easy to find two 4-element antichains (pairwise incomparable subsets) which are in distinct orbits of Aut($T$). Even in this case, we easily expand the 
language to make the structure homogeneous, as is done by adding in `join' as a new operation. Although this is no longer relational, it is still locally finite, which eases application of the Fra\"iss\'e 
theory. One can vary this example by considering instead the so-called `weakly 2-transitive trees' \cite{Droste2} (still in the countable case), being those in which the automorphism group acts transitively 
just on the family of 2-element chains. This time there are examples in which the tree has vertices for which the number of cones varies in an infinite set. This structure is definitely not 
$\aleph_0$-categorical, so if we use it to build a Fraenkel--Mostowski model, then its cardinality does not lie in $\Delta_4$, though one can easily check that it is still Dedekind-finite.

In all these cases we can attempt to find to what extent the (relational) structure $\mathcal A$ that we used when building the model is `detectable' from just the {\em set} $U$ which results, so that the 
set retains some `hidden' structure. The most straightforward instance is where the domain can be written (definably) as a disjoint union of $\aleph_0$-categorical sets, which can each be permuted 
independently of the others. So by expanding the language if necessary, we can write $A$ as the disjoint union of sets $P_n$ (thought of as unary predicates), each of which will be preserved by the 
automorphism group of $\mathcal A$ (the reason we insisted that $\mathcal A$ was relational was so that subsets would automatically give rise to induced substructures). Let $\mathfrak N$ be the resulting 
Fraenkel--Mostowski model. Then $U$ is the disjoint union of the interpretations $U_n$ of $P_n$. Furthermore, since $P_n$ is assumed $\aleph_0$-categorical, $U_n$ will be weakly Dedekind-finite. In the 
cases we consider, $U$ will be Dedekind-finite, which follows given transitivity of Aut($\mathcal A$) on each $P_n$. In fact we can verify that definable closure is locally finite. For let $X \subseteq A$ 
be finite. Then for some $n$, $X \subseteq \bigcup_{i < n}P_i$. Since the restriction of $\mathcal A$ to $P_i$ is $\aleph_0$-categorical, the algebraic closure of $X \cap P_i$ is finite, and hence so is its 
definable closure, and since Aut($\mathcal A$) acts transitively on each $P_n$, the overall definable closure of $X$ is finite. This means that given any finite support for an infinite subset of $U$, there 
must be an infinite orbit (non-trivial would suffice) of its stabilizer, so this subset cannot be well-orderable in the model. Without transitivity this would fail, since for instance, each $P_i$ could be 
an infinite dense linear ordering with endpoints, which would be $\aleph_0$-categorical but not transitive, and the set of left endpoints would form a countable subset. Also, if the pieces cannot be 
permuted independently, the argument would also fail. For instance, if every $P_i$ is a copy of $\mathbb Q$, and the group is order-preserving on each, but is required to map 0 in each copy in the same way,
then the whole of the group acts transitively on each $P_i$, but the stabilizer of 0 in one copy fixes 0 in all copies, yielding a countable subset.

\section{Remarks on equivalence}

In this final section we return to the equivalence relation which was mentioned at the start, and use some of the examples given to elucidate it. We recall that  we say that sets $X$ and $Y$ are 
{\em equivalent}, if for any first order language $L$ having countably many symbols, for any $L$-structure that can be put onto $X$, there is an elementarily equivalent $L$-structure that can be put on $Y$. 
As remarked before, in the presence of the axiom of choice, all infinite sets are equivalent. Even without choice, it will still be the case that all infinite well-orderable sets are equivalent (by the same 
proof, using L\"owenheim--Skolem; one needs to check that the existence of a model of arbitrarily large cardinality may be proved using a standard technique, for instance a Henkin style proof, using an 
effective well-ordering of the formulae involved in terms of the originally given well-ordering of the set, and the language). Furthermore, it is obvious that If $X_1 \equiv Y_1$ and $X_2 \equiv Y_2$, and 
$X_1 \cap X_2 = Y_1 \cap Y_2 = \emptyset$, then $X_1 \cup X_2 \equiv Y_1 \cup Y_2$, which means that we can immediately find enormous $\equiv$-classes (proper classes) of even non-well-orderable sets. One 
might wonder whether there are any `small' equivalence classes. Since any two sets having the same cardinality are necessarily equivalent, any $\equiv$-class not containing the empty set is automatically a 
proper class, so instead we should say that an equivalence class is `small' if the set of {\em cardinalities} of its members is small.

\begin{theorem}\label{6.1} (i) If $X$ is amorphous, and $X \equiv Y$, then $Y$ is also amorphous.

(ii) $X$ is Dedekind-finite, and $X \equiv Y$, then $Y$ is also Dedekind-finite.
\end{theorem}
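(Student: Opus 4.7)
The plan is to prove both parts by contradiction, in each case exhibiting a first-order structure that witnesses the alleged failure on one side and transferring it across $\equiv$ to contradict the hypothesis on the other side. Both arguments use $\equiv$ symmetrically, lifting structures from $Y$ to $X$ as well as from $X$ to $Y$; I would read the relation $\equiv$ as symmetric, consistent with the notation and with the word ``equivalent'', even though the definition as stated in the introduction is phrased in one direction only.

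For (i), I first show that $Y$ must be infinite. Equip $X$ with just equality; the theory of $(X,=)$ contains the sentences $\exists x_0\cdots\exists x_{n-1}\bigwedge_{i<j}x_i\neq x_j$ for every $n$. The only $\{=\}$-structure available on $Y$ is $(Y,=)$, whose theory is determined by $|Y|$, so elementary equivalence forces $Y$ to be infinite. Next, suppose for contradiction that $Y=Y_1\sqcup Y_2$ with both $Y_i$ infinite, and impose on $Y$ the $\{P\}$-structure $(Y,P^{\mathcal Y})$ with $P^{\mathcal Y}=Y_1$. For every $n$, both ``at least $n$ elements satisfy $P$'' and ``at least $n$ elements satisfy $\neg P$'' are true in $\mathcal Y$. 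Applying $\equiv$ in the $Y$-to-$X$ direction provides an elementarily equivalent $(X,P^{\mathcal X})$, and these two schemata force both $P^{\mathcal X}$ and $X\setminus P^{\mathcal X}$ to be infinite, contradicting the amorphousness of $X$.

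For (ii), I would use the standard ZF-equivalent characterisation: a set $Z$ is Dedekind-infinite if and only if there is an injective non-surjective map $f\colon Z\to Z$. If $Y$ were Dedekind-infinite, pick such an $f$ and view $(Y,f)$ as a structure in the one-function-symbol language $\{f\}$. Injectivity is the single universal sentence $\forall x\forall y\,(f(x)=f(y)\to x=y)$ and non-surjectivity is $\exists y\,\forall x\,(f(x)\neq y)$; both are first-order and preserved by elementary equivalence. Transferring to an elementarily equivalent $(X,g)$ gives an injective non-surjective self-map of $X$, hence $X$ is Dedekind-infinite, contradicting the hypothesis.

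The main obstacle is precisely the direction issue flagged in the first paragraph: the definition as stated lifts $X$-structures to $Y$-structures, but the contrapositive arguments need to go the other way, and so one has to commit to reading $\equiv$ as a genuine symmetric equivalence. Beyond this there is nothing delicate, since the failure of amorphousness and the failure of Dedekind-finiteness each have compact first-order witnesses in a finite language; only the schema ``at least $n$ elements'' (for (i)) and a pair of single sentences (for (ii)) are required.
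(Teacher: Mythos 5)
Your proof is correct and takes essentially the same route as the paper: a unary predicate naming one piece of a putative splitting of $Y$ for (i), and a unary function symbol encoding an injective non-surjective self-map for (ii), transferred across $\equiv$ in the $Y$-to-$X$ direction (which the paper also uses, implicitly reading $\equiv$ as symmetric). Your extra preliminary step checking that $Y$ is infinite is a small point the paper elides but is a sensible addition, not a different approach.
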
 

\begin{proof} (i) Suppose for a contradiction that $Y$ is not amorphous, and write $Y$ as a disjoint union $Y = Y_1 \cup Y_2$. Let $L$ the language $\{=, P\}$ where $P$ is a unary predicate, interpreted in
$Y$ as $Y_1$. Since $X \equiv Y$, there is an interpretation $X_1$ in $X$ for $P$. The facts that $Y_1$ and $Y \setminus Y_1$ are infinite are each expressible by an (infinite) set of first order sentences,
so it follows that $X_1$ and $X \setminus X_1$ must also be infinite, contrary to amorphousness of $X$.

(ii) This time we can use the language $L$ containing $=$ and one unary function symbol $f$. If $X \equiv Y$ where $Y$ is not Dedekind-finite, then there is a 1--1 function from $Y$ to $Y$ which is not onto,
which is expressible as an $L$-sentence. In any interpretation of $L$ in $X$, we would have a 1--1 but not onto function, contrary to $X$ Dedekind-finite.
\end{proof}
 
We remark that similar results hold for most of the notions of finiteness discussed in \cite{Panasawatwong}.

Even here, it doesn't follow that the $\equiv$-class of an amorphous set is small. In fact, it is consistent that there is a proper class of equivalent amorphous sets of distinct cardinalities (for 
instance, they could all be strictly amorphous), but it is also consistent that there is an amorphous set with small equivalence class. For instance, in the Fraenkel model for a single (strictly) amorphous 
set, one can easily check by a support argument that any amorphous set has cardinality which differs from the original one by a finite number (plus or minus), so actually in this case the equivalence class 
is countable. 

As remarked above, all infinite well-orderable sets are equivalent, and from this we can deduce the existence of many `large' equivalence classes, based on the above observation concerning unions. For 
instance, if $X$ is an amorphous set, then the unions of $X$ with all infinite well-ordered sets are all equivalent, again giving a proper class of possibilities. This idea is exploited in the following 
result.

\begin{theorem}\label{6.2} Any equivalence class of infinite sets which is a set must consist entirely of Dedekind finite sets.  \end{theorem}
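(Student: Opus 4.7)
The plan is to prove the contrapositive: if $X$ is infinite and not Dedekind-finite, then the equivalence class of $X$ (call it $[X]$) is a proper class.

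First, using Dedekind-infiniteness, I will split $X = X_0 \cup C$ with $C \subseteq X$ countably infinite. For any infinite ordinal $\alpha$, taken disjoint from $X$, both $C$ and $C \cup \alpha$ are infinite and well-orderable, hence $C \equiv C \cup \alpha$ by the fact, recalled at the start of this section, that any two infinite well-orderable sets are equivalent. Applying the disjoint-union property for $\equiv$ stated just before Theorem~\ref{6.1}, I obtain $X = X_0 \cup C \equiv X_0 \cup (C \cup \alpha) = X \cup \alpha$, so $X \cup \alpha \in [X]$ for every infinite ordinal $\alpha$.

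The second step will be a Hartogs argument ruling out $[X]$ being a set. If $[X]$ were a set, then $S = \{\aleph(Y) : Y \in [X]\}$ would be a set of ordinals, bounded above by some ordinal $\mu \ge \omega$. The first step gives $X \cup \mu \in [X]$, while the obvious injection $\mu \hookrightarrow X \cup \mu$ forces $\aleph(X \cup \mu) > \mu$, contradicting $\aleph(X \cup \mu) \in S$. Combined with Theorem~\ref{6.1}(ii) (which says Dedekind-finiteness is an $\equiv$-invariant), this yields the stated conclusion.

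The main obstacle I foresee is the passage from ``$X \cup \alpha \in [X]$ for every $\alpha$'' to ``$[X]$ is a proper class.'' Without the axiom of choice one cannot assume $|X|$ is comparable with the ordinals, so distinct values of $\alpha$ need not produce distinct cardinalities, and a naive counting argument fails. The Hartogs move finesses this: it only uses the choice-free injection $\alpha \hookrightarrow X \cup \alpha$, which alone suffices to drive $\aleph(X \cup \alpha)$ past any putative bound on the Hartogs ordinals of members of $[X]$.
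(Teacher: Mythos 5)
Your proposal is correct and follows essentially the same route as the paper: split off a countably infinite piece $Z$ from the Dedekind-infinite $X$, use that all infinite well-orderable sets are equivalent (via L\"owenheim--Skolem) to replace $Z$ by an arbitrary infinite ordinal, and conclude via the disjoint-union property that $X \equiv X \cup \alpha$ for every infinite ordinal $\alpha$. The only difference is that you make explicit, with a Hartogs-number argument, the final step that the paper leaves implicit (that this forces a proper class of cardinalities of members of $[X]$), which is a welcome clarification rather than a divergence.
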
 

\begin{proof} Let $X$ lie in an equivalence class which is a set, and suppose for a contradiction that $|X| \not \in \Delta$. Then we can write as a disjoint union $X = Y \cup Z$ where $|Z| = \aleph_0$. 
By the L\"owenheim--Skolem Theorem, $Z$ is equivalent to all infinite ordinals, and hence on taking unions, $X$ is equivalent to the union of $Y$ with any infinite ordinal so there is a proper class of 
cardinals of sets equivalent to $X$, contradicting our assumption.   \end{proof}

This theorem explains the main emphasis in this paper on Dedekind finite sets. It doesn't mean however that the non-Dedekind finite case need necessarily be uninteresting, but it is so far unexplored. 

The way in which the definition of equivalence works out is clearly heavily dependent upon which language or type of language we allow. We conclude with a number of examples which illustrate some of the 
possibilities.

\vspace{.1in}

\noindent{\bf Example 6.3}:
Consider the `pairs of socks' example from section 5. Here $U = \bigcup_{n \in \omega}P_n$. Then $U$ is equivalent to any union of infinitely many $P_n$, as any such can be made into a model of the same set 
of sentences on reinterpreting the predicates ${\underline P}_n$. Hence the equivalence class of $U$ has size at least $2^{\aleph_0}$. Now consider any $X \equiv U$. Then putting the structure given by the 
${\underline P}_n$ on $X$, we deduce that there must exist pairwise disjoint subsets $Q_n$ of $X$ also all of size 2. It is not clear however that $X = \bigcup_{n \in \omega}Q_n$. In ordinary first order 
model theory, we would definitely {\em not} know this, as by the compactness theorem this theory has nonstandard models, whose elements which do not lie in $\bigcup_{n \in \omega}Q_n$ all have the same 
1-type. We can try expanding the language to rule this out here by adding one more unary predicate $\underline Q$ to the language to stand for $X \setminus \bigcup_{n \in \omega}Q_n$. Since $U \equiv X$,
there must be an interpretation for all the ${\underline P}_n$ and $\underline Q$ in $U$. If we could guarantee that ${\underline P}_n$ is still interpreted by $P_n$, then this would at once yield a 
contradiction, but of course we do not know this is the case. So it is slightly mysterious which $X$ can arise here.

\vspace{.1in}

\noindent{\bf Example 6.4}
The same Fraenkel-Mostowski model results if we let $G$ be the group of permutations of $U$ which preserves the partial ordering given by $u \prec v$ if for some $m < n$, $u \in P_m$ and $v \in P_n$. The 
difference here is that we are now working in a language with finitely many symbols. For the same reason as in the previous example, there are at least $2^{\aleph_0}$ (cardinals of) sets equivalent to $U$.
Running through the same discussion, let $X \equiv U$. Thus there must exist a partial order on $X$ satisfying precisely the same sentences as $(U, \prec)$ did. In particular, every element lies in an 
antichain of size 2. Also there are elements on the bottom level, and the next, and the next ... (as these can all be expressed by first order sentences), so $X$ begins with a copy of $(U, \prec)$, but 
apparently may have other elements above. Once again, if we attempt to rule them out by introducing a new unary predicate $\underline Q$ for the `infinite' elements, then it may be possible to reinterpret
$\prec$ in $U$ to mirror this situation, by ordering the levels of which it comprised in a different way.

\vspace{.1in}

\noindent{\bf Example 6.5} We can vary the basic strictly amorphous construction at higher cardinalities $\kappa$. Let $\lambda$ and $\kappa$ be infinite (well-ordered) cardinals, with 
$\lambda \le \kappa$. We let $U = \{u_\alpha: \alpha < \kappa\}$, $G$ be the group of all permutations of $U$, and take the filter $\mathfrak F$ of subgroups of $G$ generated by the pointwise stabilizers of 
subsets of $U$ of cardinality $< \lambda$. Then in the resulting model, $U$ cannot be written as the disjoint union of two non-well-orderable sets. In fact we have included $\lambda$ in the description only
to illustrate a particular point, namely that there is no essential difference between the models formed by taking $\lambda$ and $\kappa$, and $\lambda$ and $\lambda$. As far as the model is concerned, a
subset of $U$ is either of cardinality $< \lambda$, or its complement is. For instance, if $\lambda = \omega$ we again obtain a strictly amorphous set, which is equivalent to the one found by also taking
$\kappa = \omega$. So, now assuming that $\lambda = \kappa > \omega$, we see that in order to `describe' $U$, we definitely appear to require a stronger language than just first order logic with countably 
many symbols. There are (at least) two main options; one is to add a large number of symbols, probably constants in this case; the other is to use a second order language. Adopting the second idea in the 
special case where $\kappa = \omega_1$, we can express the main features by saying that there is an infinite well-orderable subset, and any two infinite well-orderable subsets have the same cardinality. We
can also say that $X$ cannot be well-ordered, but whenever it is written as a disjoint union $Y \cup Z$, one of $Y$ and $Z$ is countable. These properties can all be expressed in second order logic. For 
larger values of $\kappa$, things are more complicated, and what can be expressed, even in second order logic, will depend to what extent $\kappa$ is definable.

\vspace{.1in}

\noindent{\bf Example 6.6} 
We can form a transfinite version of pairs of socks, in which $U = \bigcup_{\alpha < \kappa}P_\alpha$, where the $P_\alpha$ are pairwise disjoint 2-element sets. Here the group has to fix every $P_\alpha$,
and the supports are taken to be subsets of $U$ of cardinality $< \kappa$. In this model, $U$ has subsets of all cardinalities $< \kappa$, but not of cardinality $\kappa$ itself, though there is a surjection 
to $\kappa$. The situation can be described in second order logic, depending on the value of $\kappa$, or alternatively using first order logic with $\kappa$ unary predicates ${\underline P}_\alpha$. 
Similar remarks apply about its equivalence class as in Example 6.3. 

\vspace{.1in}

\noindent{\bf Example 6.7}  
Finally we may vary the vector space example, and take $U$ to be indexed by a $\kappa$-dimensional vector space $V$ over a finite field, or the rationals. The group $G$ is induced by the automorphism group
of $V$ (its general linear group), and we may take $\mathfrak F$ to be the filter of subgroups generated by the pointwise stabilizers of countable sets. This has properties analogous to example 6.5 
vis-a-vis the amorphous case, in that any subset of $U$ in the model is either contained in a countable subspace, or contains the complement of such a subspace. Again a lot can be expressed about this 
situation using second order logic, but exactly what will depend on the value of $\kappa$. The natural choice of language will have, in addition to vector addition, zero vector, and additive inverses, and a
unary function symbol signifying multiplication by each member of the field.


\end{document}